\numberwithin{equation}{section}
\newtheorem{Theorem}{Theorem}[section]
\newtheorem*{Theorem*}{Theorem}
\newtheorem{Lemma}[Theorem]{Lemma}
\theoremstyle{definition}
\newtheorem{Definition}[Theorem]{Definition}
\newtheorem{Remark}[Theorem]{Remark} }
\newcommand{\C}{\mathbb{C}}
\newcommand{\eps}{\varepsilon}
\renewcommand{\epsilon}{\varepsilon}
\newcommand{\e}{\mathrm{e}}
\newcommand{\ii}{\mathrm{i}}
\newcommand{\norm}[2][]{{\left\|#2\right\|}_{#1}}
\renewcommand{\phi}{\varphi}
\newcommand{\R}{\mathbb{R}}
\newcommand{\sclp}[2][]{{\left\langle#2\right\rangle} _{#1}}
\DeclareMathOperator{\dom}{dom}
\begin{document}

\newcommand{\arXivNumber}{2109.05465}

\renewcommand{\thefootnote}{}

\renewcommand{\PaperNumber}{105}

\FirstPageHeading

\ShortArticleName{A Sharp Lieb--Thirring Inequality for Functional Difference Operators}

\ArticleName{A Sharp Lieb--Thirring Inequality\\ for Functional Difference Operators\footnote{This paper is a~contribution to the Special Issue on Mathematics of Integrable Systems: Classical and Quantum in honor of Leon Takhtajan.

~~\,The full collection is available at \href{https://www.emis.de/journals/SIGMA/Takhtajan.html}{https://www.emis.de/journals/SIGMA/Takhtajan.html}}}

\Author{Ari LAPTEV~$^{\rm ab}$ and Lukas SCHIMMER~$^{\rm c}$}

\AuthorNameForHeading{A.~Laptev and L.~Schimmer}

\Address{$^{\rm a)}$~Department of Mathematics, Imperial College London, London SW7 2AZ, UK}
\EmailD{\href{mailto:a.laptev@imperial.ac.uk}{a.laptev@imperial.ac.uk}}
\Address{$^{\rm b)}$~Saint Petersburg State University, Saint Petersburg, Russia}

\Address{$^{\rm c)}$~Institut Mittag--Leffler, The Royal Swedish Academy of Sciences, 182 60 Djursholm, Sweden}
\EmailD{\href{mailto:lukas.schimmer@kva.se}{lukas.schimmer@kva.se}}

\ArticleDates{Received September 12, 2021, in final form November 25, 2021; Published online December 06, 2021}

\Abstract{We prove sharp Lieb--Thirring type inequalities for the eigenvalues of a class of one-dimensional functional difference operators associated to mirror curves. We furthermore prove that the bottom of the essential spectrum of these operators is a resonance state.}

\Keywords{Lieb--Thirring inequality; functional difference operator; semigroup property}

\Classification{47A75; 81Q10}

\begin{flushright}
\begin{minipage}{73mm}
\it To our friend and coauthor Leon Takhtajan\\ on the occasion of his 70th birthday
\end{minipage}
\end{flushright}

\renewcommand{\thefootnote}{\arabic{footnote}}
\setcounter{footnote}{0}

\section{Introduction}

Let $P$ be the self-adjoint quantum mechanical momentum operator on $L^2(\R)$, i.e., $P=-\ii\frac{\mathrm{d}}{\mathrm{d}x}$ and for $b>0$ denote by $U(b)$ the Weyl operator $U(b)=\exp(-bP)$. By using the Fourier transform
\begin{gather*}
\widehat{\psi}(k)=(\mathcal{F}\psi)(k)=\int_{\R}\e^{-2\pi\ii kx}\psi(x)\,{\rm d} x
\end{gather*}
we can write the domain of $U(b)$ as
\begin{gather*}
\dom(U(b))=\big\{\psi\in L^2(\R)\colon \e^{-2\pi b k}\widehat{\psi}(k)\in L^2(\R)\big\}.
\end{gather*}
Equivalently, $\dom(U(b))$ consists of those functions $\psi(x)$ which admit an analytic continuation to the strip $\{z = x+\ii y\in\C\colon 0<y <b\}$ such that $\psi(x +\ii y) \in L^2(\R)$ for all $0\leq y < b$ and there is a limit $\psi(x +\ii b - \ii 0) = \lim_{\eps\to 0^+}\psi (x + \ii b - \ii\eps)$ in the sense of convergence in $L^2(\R)$, which we will denote simply by $\psi(x+\ii b)$. The domain of the inverse operator $U(b)^{-1}$ can be characterised similarly.

For $b>0$ we define the operator $W_0(b)=U(b)+U(b)^{-1}=2\cosh(bP)$ on the domain
\begin{gather*}
\dom(W_0(b))=\big\{\psi\in L^2(\R)\colon 2\cosh(2\pi bk)\widehat{\psi}(k)\in L^2(\R)\big\}.
\end{gather*}
The operator $W_0(b)$ is self-adjoint and unitarily equivalent to the multiplication operator \linebreak $2\cosh(2\pi b k)$ in Fourier space. Its spectrum is thus absolutely continuous covering the interval
$[2,\infty)$ doubly.

Let $V\ge0$, $V\in L^1(\R)$ now be a real-valued potential function. The scalar inequality $2\cosh(2\pi b k)-2\ge (2\pi b k)^2$ implies the operator inequality
\begin{gather}
W_0(b)-2\ge-b^2\frac{\mathrm{d}^2}{\mathrm{d}x^2}
\label{WH}
\end{gather}
on $\dom(W_0(b))$. By Sobolev's inequality, we can conclude that the operator
\begin{gather*}
W_V(b)=W_0(b)- V
\end{gather*}
is symmetric and bounded from below on the common domain of $W_0(b)$ and $V$.
We can thus consider its Friedrichs extension, which we continue to denote by $W_V(b)$. This operator acts as
\begin{gather*}
(W_V(b)\psi)(x)=\psi(x+\ii b)+\psi(x-\ii b)-V(x)\psi(x).
\end{gather*}
Furthermore, by an application of Weyl's theorem (in a version for quadratic forms) and Rellich's lemma together with the fact that the form domain of $W_0(b)$ is continuously embedded in $H^1(\R)$ (as discussed at the beginning of Section~\ref{sec:sharp}) the spectrum of $W_V(b)$ consists of essential spectrum $[2,\infty)$ and discrete finite-multiplicity eigenvalues below. Details of this argument in the similar case of a Schr\"odinger operator can be found in the upcoming book \cite[Proposition 4.14]{FLW}.

We will show that the discrete spectrum satisfies a version of Lieb--Thirring inequalities for $1/2$-Riesz means.
When formulating the main result of the paper it is convenient to parametrise the eigenvalues (repeated with multiplicities) as $\lambda_j = - 2\cos(\omega_j)$, where $\omega_j\in [0,\pi]$ for
$\lambda_j\in[-2,2]$ and $\omega_j\in \ii [0,\infty)$ for $\lambda_j\le -2$.
Note that in the latter case $\lambda_j = - 2\cosh(|\omega_j|)$.

\begin{Theorem}\label{1}
Let $V\ge 0$ and let $V\in L^1(\mathbb R)$. If $W_V(b)\ge-2$, then
the discrete eigenvalues $\lambda_j=-2\cos(\omega_j)\in [-2,2)$ $($repeated with multiplicities$)$ satisfy
\begin{gather}\label{main}
\sum_{j\ge1}\frac{\sin\omega_j}{\omega_j}\le \frac{1}{2\pi b}\int_{\R}V(x) \, {\rm d} x.
\end{gather}
The constant in the inequality \eqref{main} is sharp in the sense that there is a potential
$V$ such that~\eqref{main} becomes equality.
\end{Theorem}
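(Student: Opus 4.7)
The plan is to reduce the problem via the Birman--Schwinger principle to a question about an auxiliary compact operator, and to exploit a remarkable $\omega$-independent trace identity that its explicit resolvent kernel satisfies. Specifically, a discrete eigenvalue $\lambda_j = -2\cos\omega_j \in [-2, 2)$ of $W_V(b)$ corresponds to $1$ being an eigenvalue of the compact, non-negative operator $K(\omega_j) = V^{1/2}(W_0(b) + 2\cos\omega_j)^{-1} V^{1/2}$. The resolvent $(W_0(b)+2\cos\omega)^{-1}$ is a Fourier multiplier with symbol $(2\cosh(2\pi bk)+2\cos\omega)^{-1}$, whose poles in the upper half $k$-plane lie at $k = \ii(\pi \pm \omega + 2\pi n)/(2\pi b)$ for $n \ge 0$. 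Collecting residues and summing the resulting geometric series yields the closed form
\[ G(x;\omega) = \frac{\sinh(\omega|x|/b)}{2b\sin\omega\,\sinh(\pi|x|/b)}, \]
with finite diagonal value $G(0;\omega) = \omega/(2\pi b\sin\omega)$.

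This explicit diagonal produces the key identity: for every $\omega \in (0,\pi)$,
\[ \tr K(\omega) = G(0;\omega) \int_\R V(x)\,{\rm d}x = \frac{\omega}{2\pi b \sin\omega} \int_\R V(x)\,{\rm d}x, \]
so that $(\sin\omega/\omega)\,\tr K(\omega) = (2\pi b)^{-1}\int V$ is in fact independent of $\omega$ and equals $\tr K(0)$. Moreover, the hypothesis $W_V(b) \ge -2$ is equivalent to $\|K(0)\|_{\ope} \le 1$ via the Birman--Schwinger principle applied at the threshold. The theorem is therefore reduced to proving $\sum_j \sin\omega_j/\omega_j \le \tr K(0)$.

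The heart of the proof is to extract this sum bound from the trace identity, and this is where I expect the main difficulty to lie. A na{\"\i}ve Chebyshev estimate $N(\omega) \le \tr K(\omega)$ combined with integration by parts in $\omega$ against $-(\sin\omega/\omega)'$ produces a divergent integral at $\omega = \pi$, and a term-by-term comparison $\sin\omega_j/\omega_j \le \mu_j(K(0))$ can already fail for a potential supported on two well-separated points. I would therefore attempt to exploit the factorisation $W_0(b) + 2\cos\omega = (W_0(b/2)+2\sin(\omega/2))(W_0(b/2)-2\sin(\omega/2))$ to express $K(\omega)$ as a product of ``half-parameter'' Birman--Schwinger operators, and then adapt the Hundertmark--Lieb--Thomas matrix-valued technique together with the semigroup property highlighted by the paper's keywords. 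Sharpness is then verified on the critical delta-type potential $V = 2\pi b\,\delta_0$, realised as an $L^1$ limit: the Birman--Schwinger equation $2\pi b\,G(0;\omega_1) = 1$ has the unique root $\omega_1 = 0$, so $\lambda_1 = -2$ lies at the assumed lower bound and both sides of~\eqref{main} equal $1$.
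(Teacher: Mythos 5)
Your setup is correct and matches the paper: the Birman--Schwinger reduction, the explicit resolvent kernel $G(x;\omega)$, and especially the observation that $(\sin\omega/\omega)\,\Tr K(\omega)$ is independent of $\omega$ (equivalently, the normalised operator $L_\theta = K(\omega)/G(0;\omega)$ with $\theta=\omega^2$ has $\omega$-independent trace $\int V$). You also correctly diagnose why naive routes fail. However, the proposal stops exactly at the heart of the argument: you never establish the majorisation result that makes the trace identity bite. The paper proves a monotonicity lemma (Lemma~\ref{Mon}): for $-\infty<\theta\le\theta'$ with $0\le\theta'<\pi^2$ one has $L_\theta\prec L_{\theta'}$ in the sense of Ky Fan norms, $\|L_\theta\|_n\le\|L_{\theta'}\|_n$ for all $n$. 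This follows from the explicit \emph{Green's function ratio identity} $g_{\pi^2,\theta}(x)/g_{\pi^2,\theta'}(x)=g_{\theta',\theta}(x)$ together with the fact that $\widehat{g}_{\theta',\theta}\,{\rm d}k$ is a probability measure, which lets one write $L_\theta$ as an average of unitary conjugates of $L_{\theta'}$ (averaging over modulations $\e^{-2\pi\ii kx}$) and invoke the triangle inequality for $\|\cdot\|_n$. From there, ordering $\lambda_1\le\lambda_2\le\cdots$ and applying the Birman--Schwinger principle $\mu_j(L_{\theta_j})=1/G_{\lambda_j}(0)$, one gets the telescoping chain $\sum_{j=1}^n 1/G_{\lambda_j}(0)\le\sum_{j=1}^n\mu_j(L_{\theta_n})\le\cdots\le\Tr L_{\theta_N}=\int V$, and $n\to\infty$ gives the claim.

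Your sketch instead proposes to adapt the Hundertmark--Lieb--Thomas matrix-valued technique via the factorisation $W_0(b)+2\cos\omega=(W_0(b/2)+2\sin(\omega/2))(W_0(b/2)-2\sin(\omega/2))$. The factorisation itself is correct (since $W_0(b/2)^2=W_0(b)+2$), but this is not the route the paper takes --- the paper explicitly follows the Hundertmark--Laptev--Weidl majorisation method, which is the one that exploits the semigroup/convolution property, not the HLT approach. More importantly, you leave this part as an intention (``I would therefore attempt\dots'') rather than a proof, so the key lemma is a genuine gap. The sharpness verification via $V=c\,\delta_0$ is fine (equality actually holds for every $c>0$, not just the critical one), but as stated the proposal does not constitute a proof of inequality~\eqref{main}.
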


\begin{Remark}
Note that Theorem \ref{1} does not allow to estimate eigenvalues below $-2$. In~fact, from the proof of this theorem, the case of one eigenvalue below $-2$ could be included in the inequality \eqref{main}. We expect that the inequality holds true for all eigenvalues below $-2$. However, the method we use in the proof prevents us from including all eigenvalues due to oscillating properties of the resolvent $(W_0(b) - \lambda)^{-1}$ for $\lambda<-2$.
\end{Remark}

Lieb--Thirring inequalities were first established for Schr\"odinger operators in \cite{LT}. For a~one-dimensional Schr\"odinger operator ${-}\frac{\mathrm{d}}{\mathrm{d}x^2}-V$ on $L^2(\R)$ with negative eigenvalues \mbox{$\mu_1\le\mu_2\allowbreak\le\!\cdots\!<0,\!$} these bounds state that for any $\gamma\ge1/2$ there is a constant $L_\gamma>0$ such that
\begin{gather}
\sum_{j\ge 1}|\mu_j|^{\gamma}\le L_{\gamma}\int_{\R}V(x)^{\gamma+1/2}\, {\rm d} x
\label{LTSch}
\end{gather}
for all $V\ge0$, $V\in L^{\gamma+1/2}(\R)$. The condition $\gamma\ge1/2$ is optimal. Inequality \eqref{WH} implies that
\begin{gather}
\sum_{j\ge 1}|\lambda_j-2|^{\gamma}\le \frac{L_{\gamma}}{b}\int_{\R}V(x)^{\gamma+1/2}\, {\rm d} x
\label{LTS}
\end{gather}
for all eigenvalues $\lambda_j\le 2$ of $W_V(b)$. Under the additional assumption $W_V(b)\ge-2$, our bound~\eqref{main} presents an improvement of \eqref{LTS} for $\gamma=1/2$. This can be seen from the fact that for $\gamma=1/2$ the sharp constant in \eqref{LTSch} is given by $L_{1/2}=1/2$ \cite{HLT} and from the strict inequality
\begin{gather*}
|\lambda_j-2|^\frac12=|2\cos\omega_j+2|^\frac12<\frac{\pi\sin\omega_j}{\omega_j}
\end{gather*}
for $\omega_j\in[0,\pi)$.
The difference of the terms above vanishes as $\omega_j\to\pi$, implying that \eqref{LTS} is asymptotically optimal for small coupling. While the necessity of $\gamma\ge1/2$ in the Lieb--Thirring inequality for Schr\"odinger operators does not allow us to conclude that \eqref{LTS} fails for $0\le\gamma<1/2$, we will prove the following.

\begin{Theorem}\label{th:res}
Let $b>0$. If $V\in L^1(\R)$ with $\int_{\R}V\, {\rm d} x>0$, then $W_V(b)$ has at least one eigenvalue below $2$. Furthermore, if $0\le\gamma<1/2$, then there is no constant $L_\gamma$ such that \eqref{LTS} holds for all compactly supported $V$. This conclusion holds even under the assumption that $W_V(b)\ge-2$.
\end{Theorem}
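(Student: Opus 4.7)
The guiding idea is that near $k=0$ one has $2\cosh(2\pi bk)-2=(2\pi bk)^2+O\bigl((bk)^4\bigr)$, so on low-momentum test functions $W_0(b)-2$ mimics the one-dimensional Schr\"odinger kinetic energy $-b^2\frac{{\rm d}^2}{{\rm d}x^2}$. Fix once and for all a Schwartz function $f$ with $f(0)\neq 0$, and set $\psi_L(x)=f(x/L)$ for $L>0$; both parts of the theorem will be read off from a single computation with this family.

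For the existence statement I work in Fourier space using $\widehat{\psi_L}(k)=L\widehat f(Lk)$. Substituting $\xi=Lk$ gives
\begin{gather*}
\sclp{\psi_L,(W_0(b)-2)\psi_L}=L\int_{\R}\bigl(2\cosh(2\pi b\xi/L)-2\bigr)|\widehat f(\xi)|^2\,{\rm d}\xi=\frac{(2\pi b)^2\norm{\xi\widehat f}^2}{L}+o\bigl(L^{-1}\bigr),
\end{gather*}
while dominated convergence yields $\int_{\R}V|\psi_L|^2\,{\rm d}x\to|f(0)|^2\int_{\R}V\,{\rm d}x>0$. Hence $\sclp{\psi_L,(W_V(b)-2)\psi_L}<0$ for $L$ sufficiently large, and since the essential spectrum of $W_V(b)$ is $[2,\infty)$ the infimum of its spectrum must lie strictly below $2$, producing a discrete eigenvalue there.

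For the failure of \eqref{LTS} with $0\le\gamma<1/2$ I consider the scaled potentials $V_\beta=\beta u$, where $u\ge 0$ is a fixed bounded, compactly supported function with $\int u>0$. For $\beta$ small, $V_\beta$ is a small bounded perturbation and automatically satisfies $W_{V_\beta}(b)\ge 2-\beta\norm[\infty]{u}\ge -2$. The same computation gives, for $L$ large,
\begin{gather*}
\frac{\sclp{\psi_L,(W_{V_\beta}(b)-2)\psi_L}}{\norm{\psi_L}^2}\le\frac{A}{L^2}-\frac{\beta B}{L}
\end{gather*}
with $A=(2\pi b)^2\norm{\xi\widehat f}^2/\norm{f}^2$ and $B=|f(0)|^2\int u/(2\norm{f}^2)$. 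Choosing $L=2A/(\beta B)\sim\beta^{-1}$ and invoking the variational principle yields the lower bound $|\lambda_1(V_\beta)-2|\ge B^2\beta^2/(4A)$ for all small $\beta>0$. Since $\int V_\beta^{\gamma+1/2}=\beta^{\gamma+1/2}\int u^{\gamma+1/2}$, the ratio of the left side of \eqref{LTS} to the right side is bounded below by a constant times $\beta^{\gamma-1/2}$, which diverges as $\beta\to 0^+$ whenever $\gamma<1/2$, ruling out any uniform constant $L_\gamma$.

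The one delicate step is to make the Fourier estimate in the first paragraph quantitative enough to extract the second paragraph's lower bound $|\lambda_1-2|\gtrsim\beta^2$; this reduces to the elementary fact that $2\cosh(y)-2-y^2=O(y^4)$ with an explicit constant for $|y|\le 1$, applied on the momentum scale where $\widehat f$ is essentially supported. Once that uniformity is in place, optimising in $L$ and using the scaling of $\int V_\beta^{\gamma+1/2}$ give both statements with essentially no further work.
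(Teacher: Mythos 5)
Your proposal is essentially the same strategy the paper uses: build a family of spreading test functions, compute the Rayleigh quotient of $W_{V}(b)-2$ on them to get an upper bound of the form $A/L^2-\beta B/L$, optimize in the scale parameter to produce an eigenvalue with $2-\lambda_1\gtrsim\beta^2$, and then let $\beta\to 0$ to contradict \eqref{LTS} for $\gamma<1/2$. The paper works in physical space via the identity \eqref{quadratic} with the explicit choice $\psi_\eps(x)=1/\cosh(2\eps x/b)$ and a box potential $c\chi_{[-1/2,1/2]}(x/b)$, fixing $\eps=c\delta$ rather than optimizing; you work in Fourier space with a dilated $f(x/L)$ and potential $\beta u$ and optimize $L\sim\beta^{-1}$. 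These are cosmetic differences, and your bookkeeping (the factor $\beta^{\gamma-1/2}$, the check that $W_{V_\beta}(b)\ge -2$ for small $\beta$) is correct.

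There is, however, one genuine gap in the setup. You take an arbitrary Schwartz $f$ with $f(0)\neq 0$, but the form domain of $W_0(b)$ requires $\cosh(\pi b k)\widehat{\psi}(k)\in L^2(\R)$, i.e., $\widehat{\psi_L}$ must decay exponentially. Since $\widehat{\psi_L}(k)=L\widehat f(Lk)$, you need
\begin{equation*}
\int_{\R}\cosh^2\bigl(\pi b\xi/L\bigr)\,|\widehat f(\xi)|^2\,{\rm d}\xi<\infty ,
\end{equation*}
which fails for Schwartz $f$ whose Fourier transform decays only superpolynomially but sub\-exponentially (e.g.\ any compactly supported bump function: by Paley--Wiener its transform cannot decay exponentially). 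For such $f$ the quantity $\sclp{\psi_L,(W_0(b)-2)\psi_L}$ is $+\infty$ and the expansion $(2\pi b)^2\norm{\xi\widehat f}^2/L+o(L^{-1})$ is not available. The fix is easy — take $f$ a Gaussian, or $f(x)=1/\cosh(x)$ as the paper does, or any $f$ analytic in a strip of width $>b/2$ with $L^2$ boundary values — but the statement ``fix a Schwartz function'' as written does not guarantee $\psi_L\in\dom(W_0(b)^{1/2})$, and without that the whole computation is vacuous.

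Two smaller remarks: the bound $\sclp{\psi_L,(W_0(b)-2)\psi_L}/\norm{\psi_L}^2\le A/L^2$ with your $A$ holds only up to a constant (the asymptotics give $A/L^2(1+o(1))$), so either enlarge $A$ or state the inequality for $L\ge L_0$ with a slightly worse constant; and the threshold $L_0$ from which your inequality is valid must be checked to be independent of $\beta$ — it is, since $\beta$ factors out of the potential term, but this should be said, as $L=2A/(\beta B)$ is only admissible once $\beta$ is small enough to push $L$ past $L_0$.
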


The study of different properties of functional difference operators $W_V(b)$ was considered before.
In the case when $-V =V_0= \e^{2\pi b x}$ is an exponential function, the operator $W_{V_0}(b)$ first appeared in the study of the quantum Liouville model on the lattice \cite{FT1} and plays an important role in the representation theory of the non-compact quantum group $\mathrm{SL}_{q}(2,\R)$. The spectral analysis of this operator was first studied in \cite{K}, see also
\cite{FT2}.
In the case when $-V = 2\cosh (2\pi b x)$ the spectrum of $W_{V}(b)$ is discrete and converges to $+\infty$. Its Weyl asymptotics were obtained in \cite{LST1}. This result was extended to a class of growing potentials in \cite{LST2}. More information on spectral properties of functional difference operators can be found in papers \cite{GHM,GKMR,KM,KMZ, T}.

The proof method of Theorem \ref{1} is similar to the proof of the sharp Lieb--Thirring inequa\-lity~\eqref{LTSch} for a one-dimensional Schr\"odinger operator in the case $\gamma=1/2$ as presented in \cite{HLW}. It~relies on a property of convolutions of the resolvent kernels of the operator under consideration. Such a semigroup property was also recently established for Jacobi operators where it was again used to prove sharp Lieb--Thirring type inequalities \cite{LLS}. With a different proof (not using the convolution property) the sharp inequalities for the Schr\"odinger operator and the Jacobi operator were first obtained in \cite{HLT} and in \cite{HS}, respectively. Despite formal similarity to the case of Jacobi operators, it is still surprising that the proof method works for functional difference operators $W_V(b)$. These operators could be considered as differential operators of infinite order since the symbol $\cosh(2\pi b k)$ can be written as an infinite Taylor series of symbols of even degree w.r.t.\ the variable $k$.

\section{Free resolvent}

Since $W_0(b)\ge 2$ we conclude that $W_0(b)-\lambda$ is an invertible operator for $\lambda<2$. Let
$\lambda=-2\cos(\omega)$ with $\omega\in[0,\pi]$ if $\lambda\in[-2,2]$ and $\omega\in\ii[0,\infty)$ if $\lambda<-2$. Then in Fourier space the inverse of $W_0(b)-\lambda$ is given by the multiplication operator $(2\cosh(2\pi b k)+2\cos(\omega))^{-1}$.

Applying the inverse Fourier transform $\mathcal{F}^{-1}$ to $(2\cosh(2\pi b k)+2\cos(\omega))^{-1}$ we find the kernel of the free resolvent $G_\lambda=(W_0(b)-\lambda)^{-1}$ that is
\begin{gather}\label{rezolvent}
G_\lambda(x,y) = G_\lambda(x-y)=\frac{1}{2b\sin \omega} \frac{\sinh \big(\frac{\omega}{b} (x-y)\big)}{\sinh \big(\frac{\pi}{b} (x-y)\big)}.
\end{gather}

\begin{Remark}
Note that $G_\lambda(x-y)$ is an even and positive kernel for $\omega\in[0,\pi]$ and it becomes oscillating if
$\omega\in \ii(0,\infty)$.
This fact is one of the reasons why we are able to study Lieb--Thirring inequalities only for the eigenvalues $\lambda_j \in [-2,2]$. This interval contains all of the discrete spectrum if the potential $V$ is ``small'' enough. However, if $V$ generates eigenvalues lying in $(-\infty,-2)$, then the oscillating property of the Green's function prevents us from obtaining the desired inequality for all eigenvalues.
\end{Remark}

Note that the value of $G_\lambda$ on the diagonal $x=y$ takes the form
\begin{gather}\label{G0}
G_\lambda(0)=\frac{1}{2\pi b}\frac{\omega}{\sin\omega}
\end{gather}
and we can see the relation between the right-hand side of \eqref{G0} and the expression in the left-hand side of \eqref{main}.
Due to our parameterisation of the spectral parameter, the convergence $\lambda\to 2^-$ implies $\omega\to \pi^-$ and thus
\begin{gather*}
G_\lambda(0) \sim \frac{1}{2b} \frac{1}{\sqrt{1-\cos^2\omega}} \sim \frac{1}{2b} \frac{1}{\sqrt{2-\lambda}}
\qquad {\rm as} \quad \lambda\to 2^-.
\end{gather*}
If $\lambda\to-\infty$, then $\omega \to \ii\infty$ and
\begin{gather*}
G_\lambda(0) \sim \frac{1}{\pi b} |\lambda|^{-1} \log |\lambda|.
\end{gather*}
In \cite{FT2} L.~Faddeev and L.A.~Takhtajan studied the resolvent in a slightly different form
\begin{gather*}
G_\lambda(x,y) = \frac{\sigma}{\sinh \big(\frac{\pi \ii \varkappa}{\sigma}\big) } \bigg( \frac{\e^{-2\pi \ii \varkappa(x-y)}}{1-\e^{- 4 \pi \ii \sigma (x-y)}} +
\frac{\e^{2\pi \ii \varkappa(x-y)}}{1-\e^{4 \pi \ii \sigma (x-y)}} \bigg),
\end{gather*}
which coincides with \eqref{rezolvent} with $\sigma = \ii/2b$, $\lambda = 2\cosh(2b\pi\varkappa)$ and $\varkappa = \frac{\omega - \pi}{2\pi \ii b}$.
It was pointed out that the free resolvent can be written using the analogues of the Jost solutions
\begin{gather*}
f_-(x,\varkappa) = \e^{-2\pi \ii \varkappa x} \qquad {\rm and} \qquad f_+(x,\varkappa) = \e^{2\pi \ii \varkappa x}
\end{gather*}
that appear in the theory of one-dimensional Schr\"odinger operators. Namely
\begin{gather*}
G_\lambda(x-y) = \frac{2\sigma}{C(f_-,f_+)(\varkappa)}
\bigg(\frac{f_-(x, \varkappa) f_+(y, \varkappa)}{1 - \e^{\frac{\pi \ii}{\sigma'} (x-y)}} +
\frac{f_-(y, \varkappa) f_+(x, \varkappa)}{1 - \e^{-\frac{\pi \ii}{\sigma'} (x-y)}} \bigg),
\end{gather*}
where $\sigma' \sigma = -1/4$ and where $C(f,g)$ is the so-called Casorati determinant (a difference analogue of the Wronskian) of the solutions of the functional-difference equation
\begin{gather*}
C(f,g)(x,\varkappa) = f(x+ 2\sigma', \varkappa)g(x,\varkappa) - f(x,\varkappa) g(x + 2\sigma', \varkappa).
\end{gather*}
For the Jost solutions $C(f_-,f_+)(x,\varkappa) = 2\sinh\big(\frac{\pi \ii\kappa}{\sigma}\big)$.

The equality $(W_0(b) - \lambda)G(x-y) = \delta(x-y)$ could be interpreted as an equation of distributions.
Since the functions $f_{\pm}(x,k)$ are Jost solutions, the distribution defined by $(W_0(b) - \lambda)\times G(x-y)$ is supported only at $x = y$, and its singular part coincides with the singular part of the distribution
\begin{gather*}
-\frac{2\sigma\sigma'}{\pi \ii C(f_-,f_+)(\varkappa)} \bigg(\frac{f_-(x+2\sigma',\varkappa) f_+(y,\varkappa)- f_-(y,\varkappa)f_+(x+2\sigma',\varkappa)}{x-y-\ii0}
\\ \hphantom{-\frac{2\sigma\sigma'}{\pi \ii C(f_-,f_+)(\varkappa)} \bigg(}
{} + \frac{f_-(x-2\sigma',\varkappa) f_+(y,\varkappa)
- f_-(y,\varkappa)f_+(x-2\sigma',\varkappa)}{x-y+\ii0}\bigg)
\end{gather*}
in the neighbourhood of $x = y$. This singular part is equal to
\begin{gather*}
-\frac{2\sigma\sigma'}{\pi \ii} \bigg(\frac{1}{x-y-\ii0} - \frac{1}{x-y+\ii0} \bigg) = \delta(x-y),
\end{gather*}
where the authors used the Sokhotski--Plemelj formula. This formula is similar to the respective formula for a Schr\"odinger operator when the Dirac $\delta$-function appears by differentiating a step function.

\section[Proof of inequality (1.2)]{Proof of inequality (\ref{main})}

\subsection{Some auxiliary results}
We first collect some results from \cite{HLW} verbatim.
Let $A$ be a compact operator on a Hilbert space~$\mathcal{G}$ and let us denote
\begin{gather*}
\|A\|_n=\sum_{j=1}^n\sqrt{\lambda_j(A^*A)},
\end{gather*}
where $\lambda_j(A^*A)$ are the eigenvalues of $A^*A$ in decreasing order.
Then by Ky Fan's inequality (see for example \cite[Lemma~4.2]{GK}) the functionals $\|\cdot\|_n$, $n = 1,2,\dots $,
are norms and thus for any unitary operator $Y$ in $\mathcal{G}$ we have
\begin{gather*}
\|Y^* A Y\|_n = \|A\|_n.
\end{gather*}

\begin{Definition}
Let $A$, $B$ be two compact operators on $\mathcal{G}$. We say that $A$ majorises $B$ or $B\prec A$, iff
\begin{gather*}
\|B\|_n \le \|A\|_n, \qquad \text{for all} \quad n \in \mathbb N.
\end{gather*}
\end{Definition}

\begin{Lemma}\label{lem:maj}
Let $A$ be a nonnegative compact operator acting in $\mathcal{G}$, $\{Y(k)\}_{k\in \mathbb R}$ be a family of unitary operators on $\mathcal{G}$, and let $g(k)\, {\rm d} k$ be a probability measure on $\mathbb R$. Then the operator
\begin{gather*}
B = \int_{\mathbb R} Y(k)^* A Y(k) g(k) \, {\rm d} k
\end{gather*}
is majorised by~$A$.
\end{Lemma}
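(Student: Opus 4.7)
The plan is to fix an arbitrary $n\in\N$ and establish the pointwise-in-$n$ bound $\|B\|_n\le\|A\|_n$, by combining three ingredients already available: the fact that $\|\cdot\|_n$ is a norm, the unitary invariance $\|Y(k)^*AY(k)\|_n=\|A\|_n$ recalled just above the statement, and the assumption that $g(k)\,{\rm d}k$ is a probability measure.

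The heart of the argument is a Minkowski-type inequality for the operator-valued integral defining $B$, namely
\begin{gather*}
\|B\|_n = \biggl\|\int_{\R} Y(k)^*AY(k)\, g(k)\,{\rm d}k\biggr\|_n \le \int_{\R}\|Y(k)^*AY(k)\|_n\, g(k)\,{\rm d}k.
\end{gather*}
The most transparent way I would justify this is via the duality description of the Ky Fan norm: $\|T\|_n=\sup|\Tr(TK)|$ where the supremum runs over finite-rank operators $K$ with $\|K\|\le 1$ and $\mathrm{rank}(K)\le n$ (equivalently, over partial isometries of rank at most $n$). Pairing $B$ with any such test operator $K$ and applying Fubini yields
\begin{gather*}
|\Tr(BK)| \le \int_{\R}|\Tr(Y(k)^*AY(k)K)|\,g(k)\,{\rm d}k \le \int_{\R}\|Y(k)^*AY(k)\|_n\, g(k)\,{\rm d}k,
\end{gather*}
and taking the supremum over $K$ gives the displayed inequality. (Alternatively, one can approximate the integral by Riemann sums and invoke the ordinary triangle inequality for $\|\cdot\|_n$, then pass to the limit using compactness of $A$ and boundedness of the $Y(k)$.)

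With the Minkowski step secured, the conclusion is immediate: unitary invariance collapses the integrand to the constant $\|A\|_n$, and integration against the probability measure $g(k)\,{\rm d}k$ returns $\|A\|_n$. Since $n$ was arbitrary, $B\prec A$. The only technically delicate point is the Minkowski inequality in the operator-valued setting; everything else is bookkeeping. I would expect the authors to treat this step briefly or simply cite it from \cite{HLW}, where a parallel statement underlies the sharp Schr\"odinger Lieb--Thirring proof.
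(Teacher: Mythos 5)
Your proof is correct and follows essentially the same route as the paper: the paper's proof consists precisely of the Minkowski (triangle) inequality for the Ky Fan norm of the operator-valued integral, followed by unitary invariance and the normalisation of $g(k)\,{\rm d}k$. You merely supply more detail on the Minkowski step (via the duality characterisation of $\|\cdot\|_n$ or Riemann-sum approximation), which the paper simply asserts.
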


\begin{proof}This is a simple consequence of the triangle inequality
\begin{gather*}
\|B\|_n \le \int_{\mathbb R} \| Y^*(k) A Y(k)\|_n g(k) \, {\rm d} k = \|A\|_n \int_{\mathbb R} g(k) \, {\rm d} k = \|A\|_n.\tag*{\qed}
\end{gather*}
 \renewcommand{\qed}{}
\end{proof}

Let $\lambda_j=-2\cos\omega_j\le 2$ be the eigenvalues of $W_0(b)-V$ with $V\ge0$. In order to slightly simplify the notations it is convenient to write
\begin{gather*}
\lambda_j=-2\cos\big(\sqrt{\theta_j}\big)
\end{gather*}
with $\theta_j\in\big({-}\infty,\pi^2\big]$ and $\omega_j^2=\theta_j$.

Let us denote by $K_\lambda$ the Birman--Schwinger operator
\begin{gather}\label{BSch}
K_\lambda = V^{1/2} G_\lambda V^{1/2}.
\end{gather}
Let $\mu_j(K_\lambda)$ be the eigenvalues (in decreasing order) of the Birman--Schwinger operator $K_\lambda$ defined in \eqref{BSch}.
Then due to the Birman--Schwinger principle we have
\begin{gather}\label{BSch1}
1 = \mu_j(K_{\lambda_j}).
\end{gather}
Let us define the operator
\begin{gather*}
L_{\theta}:=\frac{1}{G_{-2\cos\sqrt{\theta}}(0)} K_{-2\cos\sqrt{\theta}} ,
\end{gather*}
where $G_{-2\cos\sqrt{\theta}}(0) =\frac{1}{2\pi b}\frac{\sqrt{\theta}}{\sin\sqrt{\theta}}$ is given in \eqref{G0}.
Then from \eqref{BSch1} we obtain
\begin{gather*}
\sum_{j\ge 1} \frac{1}{G_{\lambda_j}(0)}
=\sum_{j\ge 1} \frac{1}{G_{\lambda_j}(0)} \mu_j(K_{\lambda_j})=\sum_{j\ge 1}\mu_j(L_{\theta_j}).
\end{gather*}
The integral kernel of the operator $L_\theta$ is given by $\sqrt{V(x)}g_{\pi^2,\theta}(x-y)\sqrt{V(y)}$, where
\begin{gather*}
g_{\pi^2,\theta}(x):=
\frac{\pi}{\sqrt{\theta}}\frac{\sinh \big(\frac{\sqrt{\theta}}{b}x\big)}{\sinh \big(\frac{\pi}{b}x\big)}.
\end{gather*}
Consider a more general function
\begin{gather*}
g_{\varphi,\theta}(x):=
\frac{\sqrt\varphi}{\sqrt{\theta}}\frac{\sinh \big(\frac{\sqrt{\theta}}{b}x\big)}{\sinh \big(\frac{\sqrt\varphi}{b}x\big)}.
\end{gather*}
Since $g_{\varphi,\theta}(0)=1$ its Fourier transform
$\widehat{g}_{\varphi,\theta} = \mathcal{F} (g_{\varphi,\theta})$ satisfies the equation
\begin{gather*}
\int_{\mathbb R} \widehat{g}_{\varphi,\theta}(k) \, {\rm d} k = 1.
\end{gather*}
Moreover, for any $-\infty<\theta <\varphi$ with $0<\varphi <\pi^2$ we have
\begin{gather*}
 \widehat{g}_{\varphi,\theta}(k) = \mathcal{F} \Bigg(\frac{\sqrt\varphi}{\sqrt{\theta}} \frac{\sinh\big(\frac{\sqrt{\theta}}{b}x\big)}{\sinh\big(\frac{\sqrt\varphi}{b}x\big)}\Bigg)(k)
 = \frac{2\pi \sin\big(\pi \frac{\sqrt\theta}{\sqrt\varphi}\big)}{\sqrt\theta} \frac{b}{2\cosh \big(\frac{2\pi^2 b k}{\sqrt\varphi}\big) + 2\cos\big(\frac{\pi\sqrt\theta}{\sqrt\varphi}\big)},
\end{gather*}
and the right-hand side is positive. Thus $\widehat{g}_{\varphi,\theta}\, {\rm d} k$ is a probability measure for such values.

Note also that importantly
\begin{gather*}
\frac{g_{\pi^2,\theta}(x)}{g_{\pi^2,\theta'}(x)}
=\frac{\sqrt{\theta'}}{\sqrt{\theta}}\frac{\sinh\big(\frac{\sqrt{\theta}}{b}x\big)} {\sinh\big(\frac{\sqrt{\theta'}}{b}x\big)}=g_{\theta',\theta}(x)
\end{gather*}
and therefore
\begin{gather*}
\big(\widehat{g}_{\pi^2,\theta'}*\widehat{g}_{\theta',\theta}\big)(k)=\widehat{g}_{\pi^2,\theta}(k).
\end{gather*}
This is the interesting convolution/semigroup property mentioned in the introduction. In the special case $-\infty<\theta <0=\theta'$ analogous computations lead to the same result with $\widehat{g}_{0,\theta}(k)=\chi_{[-1,1]}\big(2\pi bk/\sqrt{|\theta|}\big)\pi b/\sqrt{|\theta|}$.

\begin{Lemma}[monotonicity]\label{Mon}
For $(\theta, \theta')$ such that $-\infty<\theta \le \theta'$ and $0\le\theta' <\pi^2$ we have $L_\theta\prec L_{\theta'}$.
\end{Lemma}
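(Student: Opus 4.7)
The plan is to represent $L_\theta$ as a probability-weighted integral of unitary conjugates of $L_{\theta'}$ and invoke the majorisation lemma (Lemma~\ref{lem:maj}). The crucial algebraic input is the pointwise factorisation $g_{\pi^2,\theta}(x)=g_{\pi^2,\theta'}(x)\,g_{\theta',\theta}(x)$ displayed just before the statement, equivalent via the convolution theorem to the semigroup identity $\widehat{g}_{\pi^2,\theta'}*\widehat{g}_{\theta',\theta}=\widehat{g}_{\pi^2,\theta}$. For the range $-\infty<\theta\le\theta'$, $0\le\theta'<\pi^2$, the computations already carried out show that $\widehat{g}_{\theta',\theta}\ge 0$ and $\int_{\R}\widehat{g}_{\theta',\theta}(k)\,{\rm d} k=g_{\theta',\theta}(0)=1$, so $\widehat{g}_{\theta',\theta}\,{\rm d} k$ is a probability measure; the boundary case $\theta'=0$ is handled analogously via the explicit formula $\widehat{g}_{0,\theta}(k)=\chi_{[-1,1]}\big(2\pi bk/\sqrt{|\theta|}\big)\pi b/\sqrt{|\theta|}$.

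To execute this plan, I would insert Fourier inversion for $g_{\theta',\theta}$ directly into the kernel of $L_\theta$:
\begin{gather*}
\sqrt{V(x)}\,g_{\pi^2,\theta}(x-y)\,\sqrt{V(y)}=\int_{\R}\e^{2\pi\ii kx}\sqrt{V(x)}\,g_{\pi^2,\theta'}(x-y)\,\sqrt{V(y)}\,\e^{-2\pi\ii ky}\,\widehat{g}_{\theta',\theta}(k)\,{\rm d} k.
\end{gather*}
Letting $Y(k)$ denote the unitary multiplication operator $\psi(x)\mapsto\e^{-2\pi\ii kx}\psi(x)$ on $L^2(\R)$, the integrand is precisely the kernel of $Y(k)^{*}L_{\theta'}Y(k)$, so
\begin{gather*}
L_\theta=\int_{\R}Y(k)^{*}L_{\theta'}Y(k)\,\widehat{g}_{\theta',\theta}(k)\,{\rm d} k.
\end{gather*}
Lemma~\ref{lem:maj} then applies once $L_{\theta'}$ is known to be compact and nonnegative. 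Nonnegativity is immediate because the Fourier multiplier $(2\cosh(2\pi bk)+2\cos\sqrt{\theta'})^{-1}$ is strictly positive for $\theta'<\pi^2$, so $G_{-2\cos\sqrt{\theta'}}$ is a positive operator and thus so is the sandwich $L_{\theta'}$; compactness for $V\in L^1(\R)$ is the standard Birman--Schwinger estimate. The conclusion $L_\theta\prec L_{\theta'}$ then follows directly.

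The single conceptual ingredient is the recognition that multiplying an integral kernel by the phase $\e^{2\pi\ii k(x-y)}$ is the same as conjugating the operator by a multiplication-by-phase unitary; this turns the pointwise product $g_{\pi^2,\theta'}\cdot g_{\theta',\theta}$ into a probability-weighted average of unitary conjugations of $L_{\theta'}$ and brings the problem squarely into the framework of Lemma~\ref{lem:maj}. The main point to be careful about is the positivity of $\widehat{g}_{\theta',\theta}$ uniformly across the claimed parameter range, including the limiting case $\theta'=0$; this is precisely why the hypothesis restricts to $0\le\theta'<\pi^2$, since outside this range the explicit Fourier transform formula can change sign and the majorisation argument collapses.
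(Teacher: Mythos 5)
Your proof is correct and follows essentially the same approach as the paper's: both arrive at the representation $L_\theta=\int_{\R}Y(k)^{*}L_{\theta'}Y(k)\,\widehat{g}_{\theta',\theta}(k)\,{\rm d} k$ and then invoke Lemma~\ref{lem:maj} with the probability measure $\widehat{g}_{\theta',\theta}\,{\rm d} k$. The only cosmetic difference is that the paper first expresses both $L_\theta$ and $L_{\theta'}$ as weighted averages of unitary conjugates of the rank-one projection $T$ onto $V^{1/2}$ and then regroups a double integral via the convolution identity $\widehat{g}_{\pi^2,\theta'}*\widehat{g}_{\theta',\theta}=\widehat{g}_{\pi^2,\theta}$, whereas you insert the Fourier inversion of $\widehat{g}_{\theta',\theta}$ directly into the kernel using the pointwise factorisation $g_{\pi^2,\theta}=g_{\pi^2,\theta'}\,g_{\theta',\theta}$; these are the same computation read in a slightly different order.
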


\begin{proof}
Let $Y(k)\colon L^2(\R)\to L^2(\R)$ be the unitary multiplication operator
\begin{gather*}
(Y(k)\psi)(x)=\e^{-2\pi \ii k x}\psi(x)
\end{gather*}
and let $T$ be the projection onto $V^{1/2}$, i.e.,
\begin{gather*}
(T\psi)(x)=V^{1/2} (x) \int_\R V^{1/2}(y) \psi(y) \, {\rm d} y.
\end{gather*}
Using $Y(k'+k'')=Y(k')Y(k'')$ and Lemma \ref{lem:maj} we obtain
\begin{align*}
L_{\theta}&=\int_\R Y(k)^*T Y(k) \widehat{g}_{\pi^2,\theta}(k)\, {\rm d} k
\\
&=\int_\R\int_{\R} Y(k)^*T Y(k) \widehat{g}_{\pi^2,\theta'}(k') \widehat{g}_{\theta',\theta}(k-k')\, {\rm d} k' {\rm d} k
\\
&=\int_\R Y(k'')^*\bigg(\int_{\R} Y(k')^*T Y(k') \widehat{g}_{\pi^2,\theta'}(k')\, {\rm d} k'\bigg)Y(k'') \widehat{g}_{\theta',\theta}(k'')\, {\rm d} k'' \prec L_{\theta'},
\end{align*}
where we have used that $\widehat{g}_{\theta',\theta} \, {\rm d} k$ is a probability measure.
\end{proof}

\begin{Remark}
With a slight abuse of notations, Lemma \ref{Mon} says that $L_{\lambda}\prec L_{\lambda'}$ for any $\lambda<2$ as long as $\lambda\le \lambda'$ and $-2\le\lambda' <2$.
\end{Remark}

\subsection[Proof of inequality (1.2)]{Proof of inequality (\ref{main})}

We now enumerate the eigenvalues of the operator $W_V(b)$ belonging to the interval $[-2,2)$ such that
$-2\le \lambda_1\le\lambda_2\le\lambda_3\le \cdots$ repeated with multiplicity.
By using the monotonicity established in Lemma~\ref{Mon} we have a sequence of inequalities
\begin{gather*}
\frac{1}{G_{\lambda_1}(0)} =2\pi b \frac{\sin\omega_1}{\omega_1} = \mu_1(L_{\theta_1}) \le \mu_1(L_{\theta_2}),
\\
\sum_{j=1}^2 \frac{1}{G_{\lambda_j}(0)} = 2\pi b \sum_{j=1}^2 \frac{\sin\omega_j}{\omega_j}
\le \sum_{j=1}^2 \mu_j(L_{\theta_2})
\le \sum_{j=1}^2 \mu_j(L_{\theta_3}),
\\
\sum_{j=1}^3 \frac{1}{G_{\lambda_j}(0)} = 2\pi b \sum_{j=1}^3 \frac{\sin\omega_j}{\omega_j}
\le \sum_{j=1}^3 \mu_j(L_{\theta_3})
\le \sum_{j=1}^3 \mu_j(L_{\theta_4}), \qquad {\rm etc}.
\end{gather*}
Note that we do not use any assumptions on the multiplicities of the eigenvalues, other than their finiteness. Furthermore, by Lemma \ref{Mon} the same results also hold true if a single eigenvalue is below $-2$.
Continuing the above process and noting that the trace of $L_{\theta}$ is $\int_\R V \, {\rm d} x$ for all $\theta$, we finally obtain
\begin{gather*}
\sum_{j\ge1}\frac{\sin\omega_j}{\omega_j}\le \frac{1}{2\pi b}\int_{\R}V(x)\, {\rm d} x.
\end{gather*}
The proof is complete.

\begin{Remark}
Note that $\frac{2\cosh(2\pi bk)-2}{b^2}\to(2\pi k)^2$ tends to the symbol of the second derivative as $b\to0$ and that $W_{b^2V}(b)\ge-2$ for sufficiently small $b$. We thus expect that it should be possible to recover the Lieb--Thirring inequality \eqref{LTSch} for a Schr\"odinger operator with the sharp constant $L_{1/2}=1/2$ from Theorem \ref{1}.
\end{Remark}

\section[Sharpness of inequality (1.2)]{Sharpness of inequality (\ref{main})}\label{sec:sharp}

Similarly to the case of Schr\"odinger operators, we aim to prove that the Lieb--Thirring inequality becomes an equality for Dirac-delta potentials. To this end let $c>0$ and consider the potential $V_c(x)=c\delta(x)$. To properly define $W_{V_c}(b)$, we first note that the quadratic form $\sclp{\psi,(W_0(b)-2)\psi}$ can be written as
\begin{gather}
\sclp{\psi,(W_0(b)-2)\psi}=\int_{\R}\big|2\sinh(\pi b k)\widehat{\psi}(k)\big|^2 \, {\rm d} k=\int_{\R}|\psi(x+\ii b/2)-\psi(x-\ii b/2)|^2\, {\rm d} x.
\label{quadratic}
\end{gather}
This can be seen by introducing the self-adjoint operator $D(b)=U(b/2)-U(b/2)^{-1}= 2\sinh\big(\frac{bP}2\big)$ and checking that $D(b)^2=W_0(b)-2$ either directly or by means of the identity $\cosh(2\pi b k)-1\allowbreak=2\sinh(\pi b k)^2$. The form domain of $W_0(b)$ is thus $\dom(D(b))=\dom(W_0(b/2))\subset H^1(\R)$ and on this domain Sobolev's inequality yields that
\begin{align*}
|\psi(0)|^2&\le \eps\int_{\R}|\psi'(x)|^2 \, {\rm d} x+\frac1\eps\int_{\R}|\psi(x)|^2\, {\rm d} x\\
&\le\frac{\eps}{b^2}\int_{\R}\big|2\sinh(\pi b k)\widehat{\psi}(k)\big|^2\, {\rm d} k+\frac1\eps\int_{\R}|\psi(x)|^2\, {\rm d} x
\end{align*}
for any choice of $\eps>0$.
The KLMN theorem thus allows us to define $W_0(b)-V_c$.
As a rank one perturbation of the operator $W_0(b)$ the potential $V_c$ generates no more than one eigenvalue below the continuous spectrum $[2,\infty)$.

In Fourier space the eigenequation $(W_0(b)-c\delta)\psi_c=\lambda\psi_c$ becomes
\begin{gather*}
2\cosh(2\pi b k)\widehat{\psi_c}(k)-c \psi_c(0)=\lambda\widehat{\psi_c}(k)
\end{gather*}
by means of the formal identity $\mathcal{F}(\delta\psi_c)=\psi_c(0)$. Writing again $\lambda=-2\cos\omega$ we obtain
\begin{gather}\label{eigenf}
\widehat{\psi_c}(k)=\frac{c\psi_c(0)}{2\cosh(2\pi bk)+2\cos\omega}
\end{gather}
and therefore
\begin{gather}\label{psi0}
\psi_c(x)=c\psi_c(0) G_{-2\cos\omega}(x)
=\frac{c\psi_c(0)}{2b\sin\omega}\frac{\sinh\big(\frac{\omega}{b}x\big)}{\sinh\big(\frac{\pi}{b}x\big)}.
\end{gather}
Of course we could have seen this immediately by using the equation for the Green's function
\begin{gather*}
(W_0(b)+2\cos\omega)G_{-2\cos\omega}(x)=\delta(x).
\end{gather*}
Letting $x\to0$ in \eqref{psi0} we find
\begin{gather*}
1=\frac{c}{2b\sin\omega}\frac{\omega}{\pi}
\end{gather*}
or equivalently
\begin{gather}
\frac{\sin\omega}{\omega}=\frac{c}{2\pi b}.\label{eq:eig}
\end{gather}
Since $\frac{\sin\sqrt{\theta}}{\sqrt{\theta}}$ is a monotone decreasing function of $\theta=\omega^2\in\big({-}\infty,\pi^2\big]$ that takes all values in $[0,\infty)$, for any $c>0$ there is a unique solution $\omega_c$ to \eqref{eq:eig} and vice versa. If $c/(2\pi b)<1$ then $\omega_c\in(0,\pi)$ and otherwise $\omega_c\in\ii[0,\infty)$. Since $\int V_c\, {\rm d} x=c$, the identity \eqref{eq:eig} can be rewritten as
\begin{gather*}
\frac{\sin\omega}{\omega}=\frac{1}{2\pi b}\int_{\R}V_c(x) \, {\rm d} x
\end{gather*}
showing that the Lieb--Thirring inequality is satisfied for potentials $-c\delta$ with a single eigenvalue that can be placed anywhere in $(-\infty,2)$ by choosing $c>0$ suitably.

\begin{Remark}If we choose the normalising constant $\psi(0) >0$ then the eigenfunction defined in~\eqref{psi0}
\begin{gather*}
\psi_c(x)
=\frac{c\psi(0)}{2b\sin\omega_c}\frac{\sinh\big(\frac{\omega_c}{b}x\big)}{\sinh\big(\frac{\pi}{b}x\big)}
\end{gather*}
is positive assuming that the coupling constant $c$ is small enough satisfying the inequality $c/(2\pi b)\le1$ and thus $\omega_c \in[0,\pi)$. Note that if $c/(2\pi b) = 1$ then $\omega_c = 0$ and
\begin{gather*}
\psi_c(x) = \frac{\pi \psi(0) x}{b \sinh\big(\frac{\pi}{b}x\big)}>0.
\end{gather*}
However, if the coupling constant $c> 2\pi b$ then $\omega_c \in \ii(0,\infty)$ and hence
\begin{gather*}
\psi_c(x)=\frac{c\psi(0)}{2b\sinh|\omega_c|}\frac{\sin\big(\frac{|\omega_c|}{b}x\big)}{\sinh\big(\frac{\pi}{b}x\big)}
\end{gather*}
is an oscillating function and in particular has an infinite number of zeros. This contradicts a~possible conjecture that the eigenfunction for the lowest eigenvalue is strictly positive.
\end{Remark}

\noindent
{\bf Open problem.} Assume that the discrete spectrum $\sigma_{\rm d}(W_V(b))$ of the operator $W_V(b)$ satisfies the property $\sigma_{\rm d}(W_V(b))\subset [-2,2)$. Is it true that the eigenfunction corresponding to the lowest eigenvalue could be chosen strictly positive?

\section[Necessity of gamma >= 1/2]
{Necessity of $\boldsymbol{\gamma\ge1/2}$}

The following argument is similar to that presented in the upcoming book \cite[Propositions~4.41 and~4.42]{FLW} for the case of a Schr\"odinger operator.
For $\eps>0$ let $\psi_\eps(x)=1/\cosh(2\eps x/b)$. If $\eps$ is sufficiently small, say $\eps\le\eps_0$, then $\psi_\eps\in \dom(W_0(b))$. Using \eqref{quadratic} we compute that
\begin{gather}
\sclp{\psi_\eps,(W_0(b)-2)\psi_\eps}
=\frac{b\sin^2\eps}{2\eps}\int_{\R}\bigg|\frac{2\sinh x}{\cos^2\eps\cosh^2x+\sin^2\eps\sinh^2 x}\bigg|^2\, {\rm d} x
\le Cb\eps\label{Weps}
\end{gather}
for a constant $C>0$ independent of $\eps\le\eps_0$. For any potential $V\in L^1(\R)$ it holds that $\sclp{\psi_\eps,V\psi_\eps}\to\int_{\R}V\, {\rm d} x$ as $\eps\to0$ by dominated convergence and thus for sufficiently small $\eps$
\begin{gather*}
\sclp{\psi_\eps,(W_V(b)-2)\psi_\eps}<0.
\end{gather*}
By the min-max principle this proves the first part of Theorem \ref{th:res}.

For the second assertion of the theorem we choose more specifically the compactly supported potential $V(x)=c\chi_{[-1/2,1/2]}(x/b)$. By Sobolev's inequality $W_V(b)\ge -2$ for sufficiently small $c\le c_0$ such that all the discrete eigenvalues of $W_V(b)$ are contained in $[-2,2)$. Furthermore $\norm{\psi_\eps}^2=b/\eps$ and, since $\tanh x\ge x/2$ for $0\le x\le1$,
\begin{gather}
\sclp{\psi_\eps,V\psi_\eps}=cb\int_{-1/2}^{1/2}|\cosh(2\eps x)|^{-2}\, {\rm d} x=\frac{cb\tanh \eps}{\eps}\ge \frac12cb\label{Veps}
\end{gather}
for $\eps\le1$.
We now choose $\eps=c\delta$. If $\delta\le\min(\eps_0/c_0,1/c_0)$ such that $\eps\le\min(\eps_0,1)$, then \eqref{Weps} and \eqref{Veps} both hold and
\begin{gather*}
\frac{\sclp{\psi_\eps,(W_V(b)-2)\psi_\eps}}{\norm{\psi_\eps}^2}\le C\eps^2- \frac12c\eps=c^2\delta\bigg(C\delta-\frac{1}{2}\bigg).
\end{gather*}
Choosing $\delta<\min(\eps_0/c_0,1/c_0,1/2C)$ we can conclude by the min-max principle that $W_V(b)-2$ has a negative eigenvalue $\lambda_1\le-c^2\delta\big(\frac12-C\delta\big)$. If a Lieb--Thirring inequality \eqref{LTS} were to hold for $\gamma<1/2$ then for some finite $L_\gamma$
\begin{gather*}
c^{2\gamma}\delta^{\gamma}\bigg(\frac12-C\delta\bigg)^\gamma\le \frac{L_{\gamma}}{b}
\int_{\R} V(x)^{\gamma+\frac12}\, {\rm d} x=L_{\gamma}c^{\gamma+\frac12},
\end{gather*}
which is clearly a contradiction if $c\to0$.

\subsection*{Acknowledgements}
A.~Laptev was partially supported by RSF grant 18-11-0032. L.~Schimmer was supported by VR grant 2017-04736 at the Royal Swedish Academy of Sciences. The authors would like to thank the anonymous referees for their useful comments to improve the article.

\pdfbookmark[1]{References}{ref}
\LastPageEnding

\end{document}